\definecolor{webblue}{rgb}{0,.5,0}
\definecolor{webred}{rgb}{0,.5,0}
\definecolor{webbrown}{rgb}{.6,0,0}
\newtheorem{thm}{Theorem}[section]
\newtheorem{lem}[thm]{Lemma}
\newtheorem{cor}[thm]{Corollary}
\newtheorem{prop}[thm]{Proposition}
\newtheorem{cl}{Claim}
\theoremstyle{definition}
\newtheorem{ex}[thm]{Example}
\newcommand{\D}{d}
\renewcommand{\d}{\delta}
\newtheorem{rem}[thm]{Remark}
\numberwithin{equation}{section}
\def\la{\lambda}
\title{Higher order log-monotonicity of combinatorial sequences
\thanks{Supported partially by the National Natural Science Foundation of China (Nos.11071030,
11201191), the NSF of Jiangsu Higher Education Institutions
(No.12KJB110005) and the PAPD of Jiangsu Higher Education
Institutions.
\newline\hspace*{5mm}
   {\it Email addresses:} bxzhu@jsnu.edu.cn (B.-X. Zhu)}}
\author{Bao-Xuan Zhu}
\date{\footnotesize School of Mathematics and Statistics,
         Jiangsu Normal University,
         Xuzhou 221116, PR China}
\begin{document}

\maketitle

\begin{abstract}
A sequence $\{z_n\}_{n\geq0}$ is called ratio log-convex in the
sense that the ratio sequence $\{\frac{z_{n+1}}{z_n}\}_{n\geq0}$ is
log-convex. Based on a three-term recurrence for sequences, we
develop techniques for dealing with the ratio log-convexity of ratio
sequences. As applications, we prove that the ratio sequences of
numbers, including the derangement numbers, the Motzkin numbers, the
Fine numbers, Franel numbers and the Domb numbers are ratio
log-convex, respectively. Finally, we not only prove that the
sequence of derangement numbers is asymptotically infinitely
log-monotonic, but also show some infinite log-monotonicity of some
numbers related to the Gamma function, in particular, implying two
results of Chen {\it et al.} on the infinite log-monotonicity of the
Catalan numbers and the central binomial coefficients.
\bigskip\\
{\sl MSC:}\quad 05A20; 11B68
\\
{\sl Keywords:}\quad Ratio sequence; Log-convexity; Ratio
log-convexity; Log-concavity
\end{abstract}
\section{Introduction}
Let $\{z_n\}_{n\geq0}$ be a sequence of positive numbers. It is
called {\it log-concave} (resp. {\it log-convex}) if
$z_{n-1}z_{n+1}\le z_n^2$ (resp. $z_{n-1}z_{n+1}\ge z_n^2$) for all
$n\ge 1$. Clearly, the sequence $\{z_n\}_{n\ge 0}$ is log-concave
(resp. log-convex) if and only if the sequence
$\{z_{n+1}/z_n\}_{n\ge 0}$ is decreasing (resp. increasing).
 Motivated by a conjecture of F. Firoozbakht on
monotonicity of the sequence $\{\sqrt[n]{p_n}\}_{n\ge 1}$ for the
$n$th prime $p_n$, Sun~\cite{Sun-conj} posed a series of conjectures
about monotonicity of sequences of the forms
$\{\sqrt[n]{z_n}\}_{n\ge 1}$ and
$\{\sqrt[n+1]{z_{n+1}}/\sqrt[n]{z_n}\}$, where $\{z_n\}_{n\ge 0}$ is
a familiar number-theoretic or combinatorial sequence. In fact,
there are certain natural links between these two sequences
$\{z_{n+1}/z_n\}$ and $\{\sqrt[n]{z_n}\}$. For instance, it is well
known that if the sequence $\{z_{n+1}/z_n\}$ is convergent, then so
is the sequence $\{\sqrt[n]{z_n}\}$. Recently, Wang and
Zhu~\cite{WZ13} also found a similar result for monotonicity: if the
sequence $\{z_{n+1}/z_n\}$ is increasing (decreasing), then so is
the sequence $\{\sqrt[n]{z_n}\}$ when $z_0\le 1$ ($z_0\ge 1$). In
addition, Chen et al.~\cite{CGW1} further proved if the sequence
$\{z_{n+1}/z_n\}$ is log-concave (log-convex), then so is the
sequence $\{\sqrt[n]{z_n}\}$ under a certain initial condition.
Thus, most conjectures of Sun on the monotonicity of
$\{\sqrt[n]{z_n}\}_{n\ge 1}$ and
$\{\sqrt[n+1]{z_{n+1}}/\sqrt[n]{z_n}\}$ could be solved in certain
unified approach. Some other special results for monotonicity of
this kind can be found in Chen {\it et al.}~\cite{CGW}, Hou {\it et
al.}~\cite{HSW12}, Luca and St\u{a}nic\u{a}~\cite{LS12}, and
Zhu~\cite{Z13}. The log-behaviors of sequences arise often in
combinatorics, algebra, geometry, analysis, probability and
statistics and have been extensively investigated, see
\cite{Sta89,Bre94,LW07, WY07,Zhu12} for instance.

Define an operator $R$ on a sequence $\{z_n\}_{n\geq 0}$ by
$$R\{z_n\}_{n\geq 0}=\{x_n\}_{n\geq 0},$$ where $x_n = z_{n+1}/{z_n}$. The sequence
$\{z_n\}_{n\geq 0}$ is called {\it log-monotonic of order $k$} if
for $r$ odd and not greater than $k-1$, the sequence
$R^r\{z_n\}_{n\geq 0}$ is log-concave and for $r$ even and not
greater than $k-1$, the sequence $R^r\{z_n\}_{n\geq 0}$ is
log-convex. The sequence $\{z_n\}_{n\geq 0}$ is called {\it
infinitely log-monotonic} if it is log-monotonic of order $k$ for
all integers $k\geq0$. Note that a sequence $\{z_n\}_{n\geq0}$ is
log-monotonic sequence of order two if and only if it is log-convex
and the ratio sequence $\{\frac{z_{n+1}}{z_n}\}_{n\geq0}$ is
log-concave. This stimulates some new concepts as follows: A
sequence $\{z_n\}_{n\geq0}$ is said to be {\it ratio log-concave}
(resp. {\it ratio log-convex} ) if
$\{\frac{z_{n+1}}{z_n}\}_{n\geq0}$ is log-concave (resp.
log-convex).

After proved ratio log-concavity of some combinatorial sequences,
Chen et al.~\cite{CGW1} proposed some conjectures that the sequences
of the Motzkin numbers, the Fine numbers, the central Delannoy
numbers and the Domb numbers, respectively, are almost infinitely
log-monotonic in the sense that for each integer $k\geq 0$, these
sequences are log-monotonic of order $k$ except for some entries at
the beginning. Motivated by these conjectures, we further consider
the log-monotonic of order three. As applications, we prove that the
ratio sequences of numbers, including the derangement numbers, the
Motzkin numbers, the Fine numbers, Franel numbers and the Domb
numbers are ratio log-convex, respectively, see Section $2$. On the
other hand, Chen et al.~\cite{CGW1} also proved that the Catalan
numbers and the central binomial coefficients are infinitely
log-monotonic. These results stimulate some further observations for
infinite log-monotonicity of derangement numbers and some binomial
coefficients, see Section $3$.

\section{Ratio Log-convexity of Ratio
Sequences}
In this section, we will consider log-monotonicity of order three.
It is clear that a sequences $\{z_n\}_{n\geq 0}$ is log-monotonic
sequence of order three if and only if it is log-convex and ratio
log-concave, and the ratio sequence
$\{\frac{z_{n+1}}{z_n}\}_{n\geq0}$ is ratio log-convex.
\begin{thm}\label{thm+}
Let $\{z_n\}_{n\ge 0}$ be a sequence of positive numbers satisfying
the recurrence
\begin{equation}\label{rr-cc+}
z_{n+1}=a_nz_{n}+b_nz_{n-1}
\end{equation}
for $n\ge 1$. Assume for $n\geq 1$ that $b_{n+1}\geq b_n>0$,
$a_{n+1}\geq a_n>0$ and $$21a_n^2+11 a_{n+1}a_n-4b_{n-1}\geq0.$$ Let
the function
\begin{equation}\label{function}
f(x)=\left[(a_{n+1}a_n+b_{n+1})x+a_{n+1}b_n\right](x-a_{n-1})x^6-b_{n-1}(a_nx+b_n)^4.
\end{equation}
If there exists a positive integer $N$ and a sequence $\{\la_n\}$
such that $\frac{z_{n}}{z_{n-1}}\geq \la_n\geq a_n$, $f''(\la_n)>0$,
$f'(\la_n)>0$, and $f(\la_n)>0$ for all $n\geq N+1$, then the ratio
sequence $\{\frac{z_{n+1}}{z_n}\}_{n\ge N}$ is ratio log-convex.
\end{thm}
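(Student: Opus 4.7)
The plan is to rewrite the desired ratio log-convexity of $\{z_{n+1}/z_n\}_{n \ge N}$ as a scalar polynomial inequality $f(r_{n-1}) \ge 0$, where $r_k := z_{k+1}/z_k$, and then to deduce that inequality via a Taylor expansion of $f$ at $\lambda_n$.

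First, the algebraic translation. Setting $x = r_{n-1}$, the recurrence (\ref{rr-cc+}) reads $r_k = a_k + b_k/r_{k-1}$ and yields the explicit identities $r_{n-2} = b_{n-1}/(x - a_{n-1})$, $r_n = (a_n x + b_n)/x$, and $r_{n+1} = [(a_{n+1}a_n + b_{n+1})x + a_{n+1} b_n]/(a_n x + b_n)$. Unwinding the two nested applications of the ratio operation, $\{r_k\}_{k \ge N}$ is ratio log-convex iff $r_{n-1}^3 r_{n+1} \ge r_{n-2} r_n^3$ for every $n \ge N+2$. Plugging the identities into this inequality and clearing the strictly positive common denominator $(a_n x + b_n)\, x^3\, (x - a_{n-1})$ reduces it to precisely $f(x) \ge 0$ evaluated at $x = r_{n-1}$. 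Since $r_{n-1} \ge \lambda_n$ by hypothesis, it suffices to prove $f(x) \ge 0$ for every $x \ge \lambda_n$.

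For the latter, since $f$ is a polynomial of degree $8$, I would use the Taylor expansion
\[
f(x) = \sum_{k=0}^{8} \frac{f^{(k)}(\lambda_n)}{k!}(x - \lambda_n)^k.
\]
Because $(x - \lambda_n)^k \ge 0$ for $x \ge \lambda_n$, it is enough to check $f^{(k)}(\lambda_n) \ge 0$ for every $0 \le k \le 8$. The hypotheses provide $k = 0, 1, 2$ directly, and $k = 8$ reduces to the positive constant $8!(a_n a_{n+1} + b_{n+1})$.

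The main obstacle is verifying nonnegativity of the intermediate derivatives $f^{(k)}(\lambda_n)$ for $k = 3, 4, 5, 6, 7$. Each such value decomposes into a nonnegative contribution arising from $[(a_{n+1}a_n + b_{n+1})x + a_{n+1}b_n](x - a_{n-1})x^6$ (nonnegative at $x = \lambda_n$ because $\lambda_n \ge a_n \ge a_{n-1}$, the latter using $a_{n+1} \ge a_n$) and a negative contribution from $-b_{n-1}(a_n x + b_n)^4$. The hypothesis $21 a_n^2 + 11 a_{n+1} a_n \ge 4 b_{n-1}$ is exactly the slack needed to absorb the negative piece into the positive surplus uniformly on $x \ge a_n$; I expect the tightest cases to be $k = 3$ or $k = 4$, where the low-degree terms coming from $-b_{n-1}(a_n x + b_n)^4$ are largest relative to the derivatives of the first summand. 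Once all eight Taylor coefficients at $\lambda_n$ are shown to be nonnegative, the expansion above yields $f(r_{n-1}) \ge 0$, concluding the proof.
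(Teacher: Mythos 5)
Your reduction of the ratio log-convexity of $\{z_{n+1}/z_n\}_{n\ge N}$ to the single-variable inequality $f(r_{n-1})>0$ is correct and coincides with the paper's first step (the paper phrases it as $z_{n+2}z_{n-2}z_n^6-z_{n+1}^4z_{n-1}^4>0$ and factors out $z_{n-1}^8/b_{n-1}$, arriving at the same polynomial $f$ evaluated at $z_n/z_{n-1}$). The problem is everything after that: the entire analytic content of the theorem is the positivity of $f$ on $[\lambda_n,\infty)$, and your proposal stops exactly there. You explicitly label the verification of $f^{(k)}(\lambda_n)\ge 0$ for $k=3,\dots,7$ as ``the main obstacle,'' offer only a guess about where the tight cases lie, and never carry out any estimate. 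In particular the hypothesis $21a_n^2+11a_{n+1}a_n-4b_{n-1}\ge 0$, which exists solely to make this step work, is never actually invoked in your argument. A plan that defers the one step where the distinguishing hypothesis must enter is not a proof.

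That said, your Taylor-coefficient strategy is viable and is only a mild variant of the paper's argument, so the gap is fillable. Writing $f(x)=Ax^8+(B-Aa_{n-1})x^7-Ba_{n-1}x^6-b_{n-1}(a_nx+b_n)^4$ with $A=a_{n+1}a_n+b_{n+1}$ and $B=a_{n+1}b_n$, the cases $k=5,6,7,8$ are easy because the quartic has differentiated away and the remaining negative coefficients are dominated termwise using $x\ge a_{n-1}$; the genuinely delicate cases are $k=3$ and $k=4$, where you must absorb $24b_{n-1}a_n^3(a_n\lambda_n+b_n)$, respectively $24b_{n-1}a_n^4$, into the positive part, and this is precisely where $b_{n-1}\le 8a_{n+1}a_n$ (a consequence of the displayed hypothesis together with $a_{n+1}\ge a_n$) and $\lambda_n\ge a_n$ are needed. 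The paper sidesteps $k=4,\dots,8$ entirely: it proves $f^{(3)}(x)>0$ for all $x\ge a_n$ by an explicit chain of inequalities, deduces that $f''$, then $f'$, then $f$ are increasing on $[\lambda_n,\infty)$, and closes with the assumed signs $f''(\lambda_n),f'(\lambda_n),f(\lambda_n)>0$. Either route works, but you must actually perform the $k=3$ (and, on your route, $k=4$) computation for the proof to exist.
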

\begin{proof}
In order to prove that the ratio sequence
$\{\frac{z_{n+1}}{z_n}\}_{n\ge N}$ is ratio log-convex, it suffices
to show that
\begin{eqnarray*}
z_{n+2}z_{n-2}z_n^6-z_{n+1}^4z_{n-1}^4>0
\end{eqnarray*}
for $n>N+1$. By the recurrence~(\ref{rr-cc+}), we get that
\begin{eqnarray*}
&&z_{n+2}z_{n-2}z_n^6-z_{n+1}^4z_{n-1}^4\\
&=&(a_{n+1}z_{n+1}+b_{n+1}z_n)\frac{z_n-a_{n-1}z_{n-1}}{b_{n-1}}
z_n^6-(a_nz_{n}+b_nz_{n-1})^4z_{n-1}^4\\
&=&\frac{(a_{n+1}(a_nz_{n}+b_nz_{n-1})+b_{n+1}z_n)(z_n-a_{n-1}z_{n-1})z_n^6-b_{n-1}(a_nz_{n}+b_nz_{n-1})^4z_{n-1}^4}{b_{n-1}}\\
&=&\frac{z_{n-1}^8}{b_{n-1}}\left[(a_{n+1}(a_n\frac{z_{n}}{z_{n-1}}+b_n)+b_{n+1}\frac{z_{n}}{z_{n-1}})(\frac{z_{n}}{z_{n-1}}
-a_{n-1})(\frac{z_{n}}{z_{n-1}})^6-b_{n-1}(a_n\frac{z_{n}}{z_{n-1}}+b_n)^4\right]\\
&=&\frac{z_{n-1}^8}{b_{n-1}}f(\frac{z_{n}}{z_{n-1}}).
\end{eqnarray*}
Thus, it suffices to prove $f(\frac{z_{n}}{z_{n-1}})>0$ since
$b_{n-1}>0$.
 Noticing that
\begin{equation}
f(x)=\left[(a_{n+1}a_n+b_{n+1})x+a_{n+1}b_n\right](x-a_{n-1})x^6-b_{n-1}(a_nx+b_n)^4,
\end{equation}
we can deduce that
\begin{eqnarray*}
f^{(3)}(x)&=&8\times 42(a_{n+1}a_n+b_{n+1})x^5+5\times 42
\left[a_{n+1}b_n-a_{n-1}(a_{n+1}a_n+b_{n+1})\right]x^4\\
&&-120 a_{n+1}a_{n-1}b_nx^3-24b_{n-1}a_n^3(a_nx+b_n)\\
&=& 42(a_{n+1}a_n+b_{n+1})x^4(8x-5a_{n-1})+30
a_{n+1}b_nx^3(7x-4a_{n-1})-24b_{n-1}a_n^3(a_nx+b_n)\\
&\geq&3\times42(a_{n+1}a_n+b_{n+1})x^5+90
a_{n+1}b_nx^4-24b_{n-1}a_n^3(a_nx+b_n)\\
&\geq&3\times42(a_{n+1}a_n+b_{n+1})a_n^5+66
a_{n+1}b_nx^4-24b_{n-1}a_n^3b_n\\
&>&6a_n^3\left[21(a_{n+1}a_n+b_{n+1})a_n^2+11
a_{n+1}b_na_n-4b_{n-1}b_n\right]\\
&>&6a_n^3b_n\left[21a_n^2+11 a_{n+1}a_n-4b_{n-1}\right]\\
&\geq&0
\end{eqnarray*}
for $x\geq a_{n}$ since $b_{n+1}\geq b_n>0$ and $a_{n+1}\geq a_n>0$
for $n\geq 1$. Thus $f''(x)$ is strictly increasing for $x\geq
a_{n}$, which implies that $f''(x)\geq f''(\la_n)>0$ for $x\geq
\la_n\geq a_{n}$. Hence, it follows that $f'(x)$ is strictly
increasing for $x\geq \la_{n}$ and $f'(x)\geq f'(\la_n)>0$.
Therefore, we conclude that $f(x)$ is strictly increasing for $x\geq
\la_n$. Thus, it follows from $\frac{z_{n+1}}{z_n}\geq \la_n\geq
a_n$ that $f(\frac{z_{n+1}}{z_n})\geq f(\la_n)>0$, as desired. This
completes the proof.
\end{proof}
The lower bound $\la_n$ on $\frac{z_{n}}{z_{n-1}}$ in Theorem
\ref{thm+} can be obtained by the next result.
\begin{lem}\emph{\cite{CGW1}}
Let $\{z_n\}_{n\geq0}$ be the sequence defined by the recurrence
relation (\ref{rr-cc+}). Assume that $b(n) > 0$ for $n\geq1$. If
there exists a positive integer $N$ and a sequence $g_n$ such that
$$g_N<\frac{z_N}{z_{N-1}}<\frac{b_{N}}{g_{N+1}-a_{N}}$$
and the inequality
$$a_{n-1}+\frac{b_{n-1}}{g_{n-1}}<\frac{b_n}{g_{n+1}-b_n}$$
holds for all $n\geq N$, then for $n\geq N$,
$$g_{n}<\frac{z_n}{z_{n-1}}<\frac{b_n}{g_{n+1}-a_n}.$$
\end{lem}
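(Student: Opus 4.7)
The plan is to prove the lemma by induction on $n \geq N$, exploiting the fact that the recurrence (\ref{rr-cc+}) translates into a first-order recurrence for the ratio sequence. Writing $r_n := z_n/z_{n-1}$, dividing (\ref{rr-cc+}) by $z_n$ gives the key identity
\begin{equation*}
r_{n+1} \;=\; a_n + \frac{b_n}{r_n},
\end{equation*}
valid for all $n \geq 1$. Since $a_n, b_n > 0$, the map $r \mapsto a_n + b_n/r$ is strictly decreasing in $r$, so an \emph{upper} bound on $r_n$ produces a \emph{lower} bound on $r_{n+1}$, and vice versa. This is the whole engine of the proof.

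The base case $n = N$ is exactly the first displayed hypothesis. For the inductive step, assume $g_n < r_n < b_n/(g_{n+1} - a_n)$; I would propagate the two bounds separately. The lower bound $r_{n+1} > g_{n+1}$ is the easy half: the upper bound $r_n < b_n/(g_{n+1} - a_n)$ (together with positivity of $g_{n+1} - a_n$, which must be inferred from the hypothesis) yields $b_n/r_n > g_{n+1} - a_n$, hence $r_{n+1} = a_n + b_n/r_n > g_{n+1}$. The upper bound $r_{n+1} < b_{n+1}/(g_{n+2} - a_{n+1})$ is the harder half: from $r_n > g_n$ one only obtains $r_{n+1} < a_n + b_n/g_n$, and this must be compared to the desired upper bound. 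That comparison is precisely what the second hypothesis of the lemma is engineered to supply (modulo what appears to be a typographical slip where $b_n$ stands in place of $a_n$ in the denominator $g_{n+1}-b_n$; I would read the condition as $a_{n-1}+b_{n-1}/g_{n-1} < b_n/(g_{n+1}-a_n)$ and apply it with index shifted by one).

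Chaining these two implications along the induction closes the argument. The only step that requires genuine care is bookkeeping the positivity conditions: one needs $g_{n+1} - a_n > 0$ and $g_{n+2} - a_{n+1} > 0$ throughout, and one needs the hypothesis inequality in the shifted form to hold for every $n \geq N$ so that the induction never stalls. Neither of these is deep; they are transparent consequences of the stated hypotheses once the indices are aligned.

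The main obstacle, then, is not analytic but notational: untangling the (apparently mis-printed) inequality in the hypothesis to make sure the propagated upper bound for $r_{n+1}$ matches the target form $b_{n+1}/(g_{n+2}-a_{n+1})$. Once the correct version is isolated, the proof is a clean two-line induction driven entirely by the monotone decreasing nature of the ratio recurrence $r \mapsto a_n + b_n/r$.
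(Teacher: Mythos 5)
The paper offers no proof of this lemma --- it is quoted verbatim (misprint included) from the cited reference \cite{CGW1}, where it is established by exactly the induction you describe: rewriting the recurrence as $r_{n+1}=a_n+b_n/r_n$ and using the decreasing nature of $r\mapsto a_n+b_n/r$ to swap upper and lower bounds at each step, with the displayed hypothesis (correctly read as $a_{n-1}+b_{n-1}/g_{n-1}<b_n/(g_{n+1}-a_n)$) closing the upper-bound half. Your argument is correct, and you are right that the denominator $g_{n+1}-b_n$ is a typographical slip for $g_{n+1}-a_n$; the only bookkeeping you leave implicit is the positivity $g_n>0$ needed to pass from $r_n>g_n$ to $b_n/r_n<b_n/g_n$, which is forced by the hypotheses in the intended applications.
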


In what follows, we give some applications of Theorem~\ref{thm+}.
\begin{ex}
The derangements number $d_n$ is the number of permutations of $n$
elements with no fixed points. It is well known that the sequence
$\{d_n\}_{n\ge 0}$ satisfies the recurrence
$$d_{n+1}=n(d_{n}+d_{n-1}),$$ with $d_0=1,d_1=0,d_2=1,d_3=2$ and
$d_4=9$, see Comtet~\cite[p.~182]{Com74}. The next result can be
proved by Theorem~\ref{thm+}.
\begin{prop}\label{prop+d}
For derangements number $d_n$, the ratio sequence
$\{\frac{d_{n+1}}{d_n}\}_{n\ge 0}$ is ratio log-convex.
\end{prop}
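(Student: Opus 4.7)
The plan is to invoke Theorem~\ref{thm+} with $a_n=b_n=n$, as dictated by the derangement recurrence $d_{n+1}=n(d_n+d_{n-1})$. The three standing hypotheses are immediate: $a_n$ and $b_n$ are positive and nondecreasing, and
\[
21a_n^2+11a_{n+1}a_n-4b_{n-1}=21n^2+11n(n+1)-4(n-1)=32n^2+7n+4>0
\]
for every $n\geq 1$.

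The main remaining step is to furnish a sequence $\lambda_n$ with $d_n/d_{n-1}\geq\lambda_n\geq n$ from some cutoff index $N$ onward. I would apply the Lemma stated just after Theorem~\ref{thm+} with an auxiliary sequence $g_n$ slightly above $n$. Specialized to $a_n=b_n=n$, the Lemma's iterative inequality becomes $(n-1)+(n-1)/g_{n-1}<n/(g_{n+1}-n)$, whose feasibility one checks by elementary algebra. The classical estimate $d_n=n!/e+O(1)$, which gives $d_n/d_{n-1}=n+(-1)^n/d_{n-1}$, confirms that $d_n/d_{n-1}$ clusters around $n$ and pins down admissible choices of $g_n$ and $N$.

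With $\lambda_n$ in place, substitute $a_n=b_n=n$ into
\[
f(x)=\bigl[(n(n+1)+(n+1))x+n(n+1)\bigr](x-(n-1))x^6-(n-1)(nx+n)^4
\]
and check $f(\lambda_n),\,f'(\lambda_n),\,f''(\lambda_n)>0$. At $x\approx n$, the leading term $(n+1)^2 x^8$ contributes on the order of $n^{10}$, dwarfing the subtracted $(n-1)n^4(x+1)^4$, which is only of order $n^9$. Hence $f(\lambda_n)>0$ for $n\geq N+1$ with $N$ sufficiently large, and positivity of $f'$ and $f''$ follows from the same leading-order comparison since differentiation amplifies the dominant top-degree positive part.

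The main obstacle is the delicate positioning of $\lambda_n$: because $d_n/d_{n-1}-n=(-1)^n/d_{n-1}$ is slightly negative for odd $n$, there is only a thin window in which one can place $\lambda_n$ simultaneously above $n$ and below $d_n/d_{n-1}$. Choosing $N$ large enough so that the oscillation is absorbed into the slack of the polynomial bounds is the key technical point. Once $\lambda_n$ is fixed, the polynomial estimates on $f$, $f'$ and $f''$ are routine expansions in $n$, and the finitely many initial ratios of $\{d_{n+1}/d_n\}$ are verified by direct computation.
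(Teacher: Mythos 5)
Your overall strategy coincides with the paper's: apply Theorem~\ref{thm+} with $a_n=b_n=n$. The standing hypotheses are checked in the same way (your value $32n^2+7n+4$ is in fact the correct expansion). The divergence, and the gap, lies in how you handle $\la_n$. The paper simply takes $\la_n=a_n=n$ and verifies $f(n)>0$, $f'(n)>0$, $f''(n)>0$ by exact polynomial expansion in $n$. You instead propose to manufacture a $\la_n$ ``slightly above $n$'' via the Lemma, and you yourself identify the obstruction: since $d_n=nd_{n-1}+(-1)^n$, one has $d_n/d_{n-1}=n+(-1)^n/d_{n-1}<n$ for every odd $n$. This is not a ``thin window'' --- for odd $n$ the window $[\,n,\,d_n/d_{n-1}\,]$ is \emph{empty}, and no choice of $N$ repairs that, because the violation recurs at every odd index. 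As written, the hypothesis $\frac{d_n}{d_{n-1}}\ge\la_n\ge a_n$ of Theorem~\ref{thm+} is unsatisfiable along your route, and your proof does not close. (One could salvage the argument by observing that $f^{(3)}(x)>0$ already holds slightly below $x=n$ and taking $\la_n=n-1/d_{n-1}$, or by working directly with $f(d_n/d_{n-1})$; you do neither, and the paper itself sidesteps the point by silently evaluating at $\la_n=n$.)

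A second, independent problem is your positivity argument for $f$. At $x\approx n$ the factor $(x-n+1)$ is $O(1)$, not $O(n)$, so the positive part of $f(n)$ is $n^7(n+1)(n+2)=n^9+3n^8+2n^7$, while the subtracted part is $n^4(n-1)(n+1)^4=n^9+3n^8+2n^7-2n^6-3n^5-n^4$: the top three degrees cancel exactly and $f(n)=2n^6+3n^5+n^4$. Nothing is ``dwarfed''; the positivity of $f(\la_n)$ for $\la_n$ near $n$ is a genuine near-cancellation that must be established by exact expansion (which is what the paper's computation of $f(a_n)$, $f'(a_n)$, $f''(a_n)$ accomplishes), not by a leading-order comparison. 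Your claim that positivity of $f'$ and $f''$ ``follows from the same leading-order comparison'' is unjustified for the same reason.
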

\begin{proof}
Set $a_n=n$ and $b_{n}=n$. Clearly, they are positive and strictly
increasing, and
\begin{eqnarray*}
21a_n^2+11 a_{n+1}a_n-4b_{n-1}=32n^2-4n>0
\end{eqnarray*}
for $n\geq1$. On the other hand, it is not hard to obtain that the
function
\begin{eqnarray*}
f(x)&=&\left[(a_{n+1}a_n+b_{n+1})x+a_{n+1}b_n\right](x-a_{n-1})x^6-b_{n-1}(a_nx+b_n)^4\\
&=&(n+1)[(n+1)x+n](x-n+1)x^6-(n-1)n^4(x+1)^4.
\end{eqnarray*}
Thus, it follows that
\begin{eqnarray*}
f(a_n)&=&(1 + n)(1152 + 3968 n + 5568n^2 + 4048n^3 + 162n^4 +
        362n^5 + 4n^6 + 2n^7)>0,\\
 f'(a_n)&=&(2 + n)(2272 + 8080n + 11776 n^2 + 9164 n^3 + 4206n^4 +
        1201n^5 + 214n^6 \\
&&+ 22 n^7 + n^8)>0,\\
 f''(a_n)&=&2(2 + n)^2(2056 + 5480 n + 5656 n^2 + 2926n^3 +
        820n^4 + 119 n^5 + 7n^6) >0.
 \end{eqnarray*} So, by Theorem~\ref{thm+}, we obtain that the ratio sequence
$\{\frac{d_{n+1}}{d_n}\}_{n\ge 0}$ is ratio log-convex. This
completes the proof.
\end{proof}
\end{ex}
\begin{ex}
The Motzkin number $M_n$ counts the number of lattice paths, {\it
Motzkin paths}, starting from $(0,0)$ to $(n,0)$, with steps $(1,0),
(1,1)$ and $(1,-1)$, and never falling below the $x$-axis, or
equally, the number of lattice paths from  $(0,0)$ to $(n,n)$ with
steps $(0,2),(2,0)$ and $(1,1)$, never rising above the line $y=x$.
It is known that the Motzkin numbers satisfy the recurrence
\begin{equation}\label{rr-m}
(n+3)M_{n+1}=(2n+3)M_n+3nM_{n-1},
\end{equation}
with $M_0=M_1=1$, see \cite{Sul01} for a bijective proof. It follows
from Theorem~\ref{thm+} that the following result can be verified.
\begin{prop}
The ratio sequence $\{\frac{M_{n+1}}{M_n}\}_{n\ge 0}$ is ratio
log-convex.
\end{prop}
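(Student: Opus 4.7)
The plan is to apply Theorem~\ref{thm+} to the normalized recurrence. Dividing (\ref{rr-m}) by $n+3$ puts the Motzkin recurrence in the form $M_{n+1}=a_nM_n+b_nM_{n-1}$ with
$$a_n=\frac{2n+3}{n+3},\qquad b_n=\frac{3n}{n+3}.$$
Direct computation gives $a_{n+1}-a_n=3/[(n+3)(n+4)]>0$ and $b_{n+1}-b_n=9/[(n+3)(n+4)]>0$, so both sequences are positive and strictly increasing for $n\ge 1$. Combined with $a_n\ge a_1=5/4$ and $b_{n-1}<3$, this yields
$$21a_n^2+11a_{n+1}a_n-4b_{n-1}\ge 32a_n^2-12\ge 32\cdot(5/4)^2-12=38>0,$$
so the structural hypotheses of Theorem~\ref{thm+} hold.

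Next, I would use the preceding lemma to produce a sequence $\la_n$ satisfying $a_n\le\la_n\le M_n/M_{n-1}$. Since $M_n/M_{n-1}$ tends to $3$ from below, a natural ansatz is $g_n=3-c/n$, or more generally a truncation of the asymptotic expansion of $M_n/M_{n-1}$ about~$3$. Plugging $g_n$ into the iterative inequality of the lemma reduces it to a rational inequality in $n$ that holds from some explicit threshold $N$ onward; the base case is checked numerically, and the lemma then delivers the desired $\la_n$.

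The crux is the verification of $f(\la_n)$, $f'(\la_n)$, $f''(\la_n)>0$ for $n\ge N+1$. Here a delicate cancellation appears: at the ``frozen'' limit $x=3$ with $a_{n-1}=a_n=a_{n+1}=2$ and $b_{n-1}=b_n=b_{n+1}=3$, the polynomial~(\ref{function}) evaluates to $27\cdot 729-3\cdot 9^4=19683-19683=0$. Thus the leading order in $f(\la_n)$ cancels exactly and the sign is determined by the $1/n$ corrections in $a_n$, $b_n$, and $\la_n$. By contrast, the same frozen evaluation yields $f'(3)=46656>0$ and $f''(3)=163296>0$, so the two derivative conditions follow from a continuity argument once $\la_n$ is sufficiently close to $3$.

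The main obstacle is therefore the positivity of $f(\la_n)$ itself. I would tune the constant $c$ (and, if necessary, higher-order coefficients) in $g_n=3-c/n$ so that the first surviving term in the expansion of $f(\la_n)$ in powers of $1/n$ is manifestly positive. Once this is established, $f(\la_n)>0$ holds for all sufficiently large $n$, and the finitely many remaining small-$n$ cases are dispatched by direct numerical computation, completing the application of Theorem~\ref{thm+}.
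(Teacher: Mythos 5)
Your setup coincides with the paper's: the same normalization $a_n=\frac{2n+3}{n+3}$, $b_n=\frac{3n}{n+3}$, the same structural checks, and the same strategy of feeding a lower bound $\la_n$ for $M_n/M_{n-1}$ into Theorem~\ref{thm+}. The one substantive divergence is where $\la_n$ comes from: the paper does not derive it from the lemma but imports the explicit bound $\frac{M_n}{M_{n-1}}\ge \frac{6n^2+3n-8/9}{2n(n+2)}>a_n$ from Chen \emph{et al.}\ \cite{CGW1}, sets $\la_n=\frac{6n^2+3n-8/9}{2n(n+2)}$, and then verifies $f(\la_n)>0$, $f'(\la_n)>0$, $f''(\la_n)>0$ for all $n\ge1$ by a symbolic (Maple) computation. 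Your observation that the frozen evaluation at $x=3$, $a=2$, $b=3$ gives $f(3)=27\cdot729-3\cdot9^4=0$ while $f'(3)=46656>0$ is correct and is a genuinely useful diagnosis that the paper never makes explicit: it explains why the derivative conditions are soft but $f(\la_n)>0$ is borderline.

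That said, your argument has a real gap precisely at the point you identify as the crux. You leave the positivity of $f(\la_n)$ to a promise that the constant $c$ in $g_n=3-c/n$ can be ``tuned'' so that the first surviving term of the $1/n$-expansion is positive, but this tuning is not free. Since $M_n/M_{n-1}\approx 3-\frac{9}{2n}$, any valid lower bound of the form $3-c/n$ forces $c\ge 9/2$ up to lower-order corrections, and because $f'(3)>0$ the displacement $\la_n-3\approx-\frac{c}{n}$ contributes a \emph{negative} term of size $\frac{c}{n}f'(3)\approx\frac{9}{2n}\cdot 46656$ at first order; whether the competing $O(1/n)$ drifts of $a_{n-1},a_n,a_{n+1},b_{n-1},b_n,b_{n+1}$ away from their limits $2$ and $3$ outweigh it is exactly the question, and it cannot be settled by adjusting $c$ alone. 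So the decisive inequality is neither reduced to a checkable closed form nor verified for any concrete $\la_n$. To close the gap you must either carry out the $1/n$-expansion of $f(\la_n)$ with an explicit admissible $\la_n$ and exhibit the sign of the leading surviving coefficient, or do what the paper does: fix $\la_n=\frac{6n^2+3n-8/9}{2n(n+2)}$ and verify the three polynomial inequalities in $n$ directly.
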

\begin{proof}
By the recurrence (\ref{rr-m}), we have $a_n=\frac{2n+3}{n+3}$ and
$b_{n}=\frac{3n}{n+3}$, which are positive and strictly increasing,
and $21a_n^2+11 a_{n+1}a_n-4b_{n-1} >0$ for $n\geq1$. On the other
hand, it is easy to obtain that the function
\begin{eqnarray*}
f(x)&=&\left[(a_{n+1}a_n+b_{n+1})x+a_{n+1}b_n\right](x-a_{n-1})x^6-b_{n-1}(a_nx+b_n)^4\\
&=&\frac{(7n^2+28n+24)x+3(2n+5)}{(n+3)(n+4)}(x-\frac{2n+1}{n+2})x^6-\frac{(3n-3)[(2n+3)x+3n]^4}{(n+2)(n+3)^4}.
\end{eqnarray*}
Further, from Chen {\it et al.} \cite{CGW1}, we have
$$\frac{M_{n}}{M_{n-1}}\geq \frac{6n^2 +3n - 8/9}{2n(n+2)}>a_n.$$ Thus,
taking $\la_n=\frac{6n^2 +3n - 8/9}{2n(n+2)}$ and using Maple, we
can get that $f''(\la_n)>0$, $f'(\la_n)>0$ and $f(\la_n)>0$ for
$n\geq1$, as desired. Hence, by Theorem~\ref{thm+}, the ratio
sequence $\{\frac{M_{n+1}}{M_n}\}_{n\ge 0}$ is ratio log-convex.
This completes the proof.
\end{proof}
\end{ex}
\begin{ex}
The Fine number $f_n$ is the number of Dyck paths from $(0,0)$ to
$(2n,0)$ with no hills. The Fine numbers satisfy the recurrence
\begin{equation}\label{rr-f}
2(n+2)f_{n+1}=(7n+2)f_{n}+2(2n+1)f_{n-1},
\end{equation}
with $f_0=1$ and $f_1=0$, see \cite{PW99} for a bijective proof.
\begin{prop}
The ratio sequence $\{\frac{f_{n+1}}{f_n}\}_{n\ge 0}$ is ratio
log-convex.
\end{prop}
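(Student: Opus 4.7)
The plan is to apply Theorem~\ref{thm+} in exactly the style of the Motzkin example above. First I would rewrite the recurrence~(\ref{rr-f}) in the form $f_{n+1}=a_nf_n+b_nf_{n-1}$ by dividing through by $2(n+2)$, obtaining
\[
a_n=\frac{7n+2}{2(n+2)}, \qquad b_n=\frac{2n+1}{n+2}.
\]
Both $a_n$ and $b_n$ are positive and strictly increasing in $n$ (their forward differences are manifestly positive rational functions of $n$; in fact $a_n\nearrow 7/2$ and $b_n\nearrow 2$). The polynomial hypothesis $21a_n^2+11a_{n+1}a_n-4b_{n-1}\geq 0$ is then automatic: for $n\geq 1$ we have $a_n\geq a_1=3/2$ while $b_{n-1}\leq 2$, so $21a_n^2\geq 21\cdot(3/2)^2>8\geq 4b_{n-1}$ already, and the middle term $11a_{n+1}a_n$ is nonnegative.

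Next I need an explicit lower bound $\lambda_n$ for $f_n/f_{n-1}$ satisfying $\lambda_n\geq a_n$. Guided by the asymptotic $f_{n+1}/f_n\to 4$ and by the shape of the bound used in the Motzkin proof, I would try an ansatz of the form $g_n=\frac{An^2+Bn+C}{n(n+2)}$ and tune the constants so that (i) the two inequalities of the preceding Chen--Guo--Wang Lemma propagate from $n$ to $n+1$, and (ii) the base case at some modest index $N$ is satisfied by direct numerical evaluation of $f_0,\ldots,f_{N+1}$. This yields $f_n/f_{n-1}\geq g_n=:\lambda_n\geq a_n$ for all $n\geq N$.

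With $\lambda_n$ in hand, what remains is the same symbolic verification carried out in the Motzkin proof: substitute $\lambda_n$ into the degree-$8$ polynomial~(\ref{function}) and its first two derivatives, clear denominators, and check that the resulting numerators, viewed as polynomials in $n$, are positive for $n\geq N$. In practice this is accomplished by displaying each numerator as a polynomial in $n-N$ with nonnegative coefficients (a Maple calculation). Theorem~\ref{thm+} then gives ratio log-convexity of $\{f_{n+1}/f_n\}_{n\geq N}$, and the finitely many initial indices (with the convention that $f_1=0$ forces one to start the ratio sequence from $n\geq 2$ in any case) are handled by direct numerical checks on the defining inequality $f_{n+2}f_{n-2}f_n^6\geq f_{n+1}^4f_{n-1}^4$.

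The main obstacle I expect is the delicate choice of $\lambda_n$. It must simultaneously be a lower bound certified by the Lemma, dominate $a_n$, and lie inside the strict positivity region of all three of $f$, $f'$, $f''$. Since $f$ has degree $8$ in $x$ with coefficients themselves of degree up to $4$ in $n$, after clearing denominators the polynomial in $n$ whose positivity must be certified can easily reach degree in the twenties; a slightly too loose ansatz will produce a negative leading or subleading coefficient, forcing a re-tune of the $O(1/n^2)$ correction in $g_n$. Once the correct order of approximation is pinned down, however, the rest is a mechanical computation parallel to the Motzkin case.
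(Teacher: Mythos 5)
Your proposal is correct and takes essentially the same approach as the paper: verify the hypotheses of Theorem~\ref{thm+} for $a_n=\frac{7n+2}{2(n+2)}$ and $b_n=\frac{2n+1}{n+2}$, supply an explicit lower bound $\lambda_n\geq a_n$ for $f_n/f_{n-1}$, and confirm $f(\lambda_n)>0$, $f'(\lambda_n)>0$, $f''(\lambda_n)>0$ by symbolic computation. The only (inessential) difference is that the paper simply quotes the ready-made bound $f_n/f_{n-1}\geq\frac{4n+6}{n+3}$ from Liu and Wang instead of deriving a bound via the propagation lemma as you propose.
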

\begin{proof}
Note that $a_n=\frac{(7n+2)}{2(n+2)}$ and
$b_{n}=\frac{(2n+1)}{n+2}$. Obviously, they are positive and
strictly increasing and $21a_n^2+11 a_{n+1}a_n-4b_{n-1} >0$ for
$n\geq1$. On the other hand, from Liu and Wang \cite{LW07}, we have
$$\frac{f_{n}}{f_{n-1}}\geq \frac{4n+6}{n+3}>a_n.$$ Thus,
taking $\la_n=\frac{4n+6}{n+3}$ and substituting
$a_n=\frac{(7n+2)}{2(n+2)}$ and $b_{n}=\frac{(2n+1)}{n+2}$ into
(\ref{function}), we can easily get that $f''(\la_n)>0$,
$f'(\la_n)>0$ and $f(\la_n)>0$ for $n\geq2$ by using Maple. Hence,
by Theorem~\ref{thm+}, we obtain that the ratio sequence
$\{\frac{f_{n+1}}{f_n}\}_{n\ge 0}$ is ratio log-convex. This
completes the proof.
\end{proof}
\end{ex}
\begin{ex}
The Franel numbers $F_n=\sum_{k=0}^n\binom{n}{k}^3$ and they also
satisfy the recurrence
$$(n+1)^2F_{n+1}=(7n(n+1)+2)F_n+8n^2F_{n-1}$$ for all $n\geq 1$
with the initial values given by $F_0=1,F_1=2,F_2=10$, see
\cite[A000172 ]{Slo}. Similarly, by Theorem~\ref{thm+}, we can show
the following result (we leave the details to the reader), where we
can take $\la_n=\frac{8n^2+8n+1}{(n+1)^2}$.
\begin{prop}
The ratio sequence $\{\frac{F_{n+1}}{F_n}\}_{n\ge 0}$ is ratio
log-convex.
\end{prop}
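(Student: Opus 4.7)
The plan is to apply Theorem~\ref{thm+} to the Franel recurrence, rewritten in the standard form \eqref{rr-cc+} as $F_{n+1}=a_nF_n+b_nF_{n-1}$ with
$$a_n=\frac{7n^2+7n+2}{(n+1)^2},\qquad b_n=\frac{8n^2}{(n+1)^2},$$
using the candidate lower bound $\la_n=(8n^2+8n+1)/(n+1)^2$ suggested at the end of the example. First I would verify the structural hypotheses on $a_n$ and $b_n$: writing $a_n=7-\frac{7n+5}{(n+1)^2}$ and $b_n=8-\frac{16n+8}{(n+1)^2}$ makes it transparent that both sequences are positive and strictly increasing, and since $a_n\ge a_1=4$ while $b_{n-1}<8$, the bound $21a_n^2+11a_{n+1}a_n-4b_{n-1}\ge 21\cdot 16-4\cdot 8>0$ is automatic for $n\ge 1$.

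The next step is to establish the lower bound $F_n/F_{n-1}\ge \la_n$ for $n\ge N$ with a suitable $N$. The comparison $\la_n\ge a_n$ reduces to $\la_n-a_n=(n^2+n-1)/(n+1)^2\ge 0$, valid for $n\ge 1$. To bound $F_n/F_{n-1}$ from below by $\la_n$, I would invoke the cited Lemma taking $g_n=\la_n$; its substantive hypothesis, $a_{n-1}+b_{n-1}/\la_{n-1}<b_n/(\la_{n+1}-a_n)$, becomes after clearing denominators a polynomial inequality in $n$, whose sign can be certified by inspecting its leading coefficient together with a finite table of small cases.

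With $\la_n$ in place, I would then substitute $a_n,b_n$ into the polynomial $f(x)$ of \eqref{function} and evaluate $f(\la_n)$, $f'(\la_n)$, $f''(\la_n)$. Each is a rational function of $n$ whose numerator is a polynomial; after clearing denominators with Maple the verification reduces to checking positivity of these numerators for $n\ge N$, exactly in the spirit of the Motzkin and Fine arguments immediately preceding. Once the three inequalities $f(\la_n)>0$, $f'(\la_n)>0$, $f''(\la_n)>0$ are confirmed, Theorem~\ref{thm+} delivers the ratio log-convexity of $\{F_{n+1}/F_n\}_{n\ge N}$, and the finitely many initial entries are handled by direct inspection to extend the conclusion to all $n\ge 0$.

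The main obstacle is computational rather than conceptual. Although the structure of the proof mirrors the earlier examples, the polynomial $f(x)$ has degree eight in $x$ and its coefficients are rational functions of $n$, so the numerators of $f(\la_n)$, $f'(\la_n)$, $f''(\la_n)$ expand into polynomials in $n$ of sizable degree after denominators are cleared. Certifying their positivity is not visually obvious and depends on symbolic simplification in Maple, together with a small finite check to fix the correct starting index $N$.
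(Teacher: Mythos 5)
Your plan is the same one the paper intends: for the Franel numbers the paper gives no details beyond ``apply Theorem~\ref{thm+} with $\lambda_n=\frac{8n^2+8n+1}{(n+1)^2}$,'' and your fleshing-out --- identify $a_n=\frac{7n^2+7n+2}{(n+1)^2}$ and $b_n=\frac{8n^2}{(n+1)^2}$, check positivity, monotonicity and $21a_n^2+11a_{n+1}a_n-4b_{n-1}\ge 0$, establish a lower bound on the ratio via the quoted lemma, then certify $f(\lambda_n),f'(\lambda_n),f''(\lambda_n)>0$ symbolically --- is exactly the intended argument, in the same spirit as the Motzkin and Fine examples.

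There is, however, one step that would fail as literally written: the inequality $F_n/F_{n-1}\ge \lambda_n$ with $\lambda_n=\frac{8n^2+8n+1}{(n+1)^2}$ is false. Already $F_2/F_1=5<\frac{49}{9}$, $F_3/F_2=\frac{28}{5}<\frac{97}{16}$, $F_4/F_3=\frac{173}{28}<\frac{161}{25}$, and numerically the gap persists for all $n$ (both quantities are $8-\frac{8}{n}+O(n^{-2})$, with $\lambda_n$ the larger). The expression $\frac{8n^2+8n+1}{(n+1)^2}$ is a lower bound for $F_{n+1}/F_n$, not for $F_n/F_{n-1}$; the paper is loose about this index shift throughout (the same slip appears in its Motzkin example, where the quoted bound on $M_n/M_{n-1}$ fails at $n=3,5$ but holds for $M_{n+1}/M_n$). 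To feed Theorem~\ref{thm+} as stated you must use the shifted bound $\lambda_n=\frac{8n^2-8n+1}{n^2}$ for $F_n/F_{n-1}$; note that then $\lambda_n\ge a_n$ only from $n=3$ on (it fails at $n=1,2$), so the symbolic positivity checks start at a slightly later index and the first few inequalities $F_{n+2}F_{n-2}F_n^6-F_{n+1}^4F_{n-1}^4>0$ must be verified directly, as you anticipate. With that one-index repair your argument goes through and coincides with the paper's.
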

\end{ex}
For another case that $b(n) < 0$ for all $n\geq1$ in the recurrence
(\ref{rr-cc+}), similar to the proof of Theorem~\ref{thm+}, we can
also prove the following criterion, whose proof is omitted for
brevity.
\begin{thm}\label{thm-}
Let $\{z_n\}_{n\ge 0}$ be a sequence of positive numbers satisfying
the recurrence
\begin{equation}\label{rr-c-}
z_{n+1}=a_nz_{n}+b_nz_{n-1}
\end{equation}
for $n\ge 1$, where $a_n>0$ and $b_n<0$. Assume that
$a_{n+1}a_n+b_{n+1}>0$ for $n\geq 1$. Let the function
\begin{equation}\label{Function}
f(x)=\left[(a_{n+1}a_n+b_{n+1})x+a_{n+1}b_n\right](x-a_{n-1})x^6-b_{n-1}(a_nx+b_n)^4.
\end{equation}
If there exists two sequences $r(n)$ and $s(n)$ and a positive $N$
such that, for all $n\geq N$,
\begin{enumerate}
\item [\rm(i)]
$r(n)\leq\frac{z_{n}}{z_{n-1}}\leq s(n)\leq a_n$,
\item [\rm(ii)]
$8(a_{n+1}a_n+b_{n+1})r(n)+5
(a_{n+1}b_{n}-a_{n-1}a_{n+1}a_n-a_{n-1}b_{n+1})\geq0$ and
$a_nr(n)+b_n\geq0$,
\item [\rm(iii)]
$f''(r(n))>0$, $f'(r(n))>0$ and $f(s(n))<0$,
\end{enumerate} then the ratio sequence
$\{\frac{z_{n+1}}{z_n}\}_{n\ge N}$ is ratio log-convex.
\end{thm}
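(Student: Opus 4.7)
The plan is to follow the template of the proof of Theorem~\ref{thm+} closely, paying attention to the sign reversals induced by $b_n<0$. I would start from the same algebraic identity: use~\eqref{rr-c-} to write $z_{n-2}=(z_n-a_{n-1}z_{n-1})/b_{n-1}$ and $z_{n+2}=a_{n+1}z_{n+1}+b_{n+1}z_n$, and then factor to obtain
\[
z_{n+2}z_{n-2}z_n^6-z_{n+1}^4z_{n-1}^4 \;=\; \frac{z_{n-1}^8}{b_{n-1}}\, f\!\left(\tfrac{z_n}{z_{n-1}}\right),
\]
with $f$ as in~\eqref{Function}. Since $b_{n-1}<0$, the desired inequality $z_{n+2}z_{n-2}z_n^6>z_{n+1}^4z_{n-1}^4$ (that is, ratio log-convexity of $\{z_{n+1}/z_n\}_{n\ge N}$) is equivalent to $f(z_n/z_{n-1})<0$, the reverse sign from the positive-$b$ setting.

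To produce this negativity, I would show that $f$ is strictly increasing on $[r(n),\infty)$; then condition~(i) gives $z_n/z_{n-1}\le s(n)$, so that $f(z_n/z_{n-1})\le f(s(n))<0$ by condition~(iii). The increasing property is obtained by the same triple-differentiation cascade as in Theorem~\ref{thm+}: compute
\[
f^{(3)}(x)=42\,x^4\bigl[8(a_{n+1}a_n+b_{n+1})x+5(a_{n+1}b_n-a_{n-1}a_{n+1}a_n-a_{n-1}b_{n+1})\bigr]-120\,a_{n-1}a_{n+1}b_n x^3-24\,b_{n-1}a_n^3(a_nx+b_n),
\]
and verify that all three blocks are nonnegative on $[r(n),\infty)$. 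The bracketed factor is bounded below using $a_{n+1}a_n+b_{n+1}>0$, monotonicity in $x$, and the first half of condition~(ii); the middle term $-120\,a_{n-1}a_{n+1}b_n x^3$ is nonnegative because $b_n<0$; and the last term is nonnegative because $b_{n-1}<0$ and, by the second half of condition~(ii), $a_nx+b_n\ge a_nr(n)+b_n\ge 0$. Hence $f^{(3)}\ge 0$ on $[r(n),\infty)$, and then the conditions $f''(r(n))>0$ and $f'(r(n))>0$ from~(iii) propagate outward to give $f$ strictly increasing on $[r(n),\infty)$.

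The main obstacle is the sign bookkeeping rather than any new analytic idea: condition~(ii) is tailored precisely so that each of the three blocks in $f^{(3)}$ stays nonnegative after the sign reversal of $b_n$, and the roles of the lower bound $r(n)$ and upper bound $s(n)$ now split between certifying the derivative cascade (via $r(n)$) and certifying $f<0$ at the endpoint (via $s(n)$). Once this bookkeeping is carried out correctly, the argument is essentially word-for-word parallel to that of Theorem~\ref{thm+}, which is why the author omits the routine calculation.
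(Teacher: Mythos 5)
Your proposal is correct and is exactly the argument the paper has in mind: the paper omits the proof of Theorem~\ref{thm-} as being ``similar to the proof of Theorem~\ref{thm+},'' and your reconstruction carries out that parallel argument with the right sign reversals — the factor $z_{n-1}^8/b_{n-1}$ now being negative forces $f<0$ at $z_n/z_{n-1}$, the two halves of condition~(ii) keep the three blocks of $f^{(3)}$ nonnegative on $[r(n),\infty)$, and $f(s(n))<0$ together with the monotonicity of $f$ closes the argument. No gaps.
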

\begin{rem}
The lower bound $r(n)$ for $\frac{z_{n}}{z_{n-1}}$ in Theorem
\ref{thm-} can be obtained by the log-convexity property of $z_n$.
The upper bound $\la_n$ on $\frac{z_{n}}{z_{n-1}}$ in Theorem
\ref{thm-} can be obtained by the next result.
\end{rem}
\begin{lem}\emph{\cite{CGW1}}
Let $\{z_n\}_{n\geq0}$ be the sequence defined by the recurrence
relation (\ref{rr-c-}). Assume that $b(n)< 0$ for $n\geq1$. If there
exists a positive integer $N$ and a sequence $h_n$ such that
$\frac{z_N}{z_{N-1}}<h_N$ and the inequality
$$h_{n+1}>a_{n}+\frac{b_{n}}{h_{n}}$$
holds for all $n\geq N$, then $\frac{z_n}{z_{n-1}}<h_n$ for $n\geq
N$.
\end{lem}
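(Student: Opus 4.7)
The plan is to prove the bound by induction on $n \geq N$, exploiting the sign change that results from $b_n < 0$ when dividing the recurrence by $z_n$.

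For the base case $n = N$ the inequality $\frac{z_N}{z_{N-1}} < h_N$ is given as a hypothesis. For the inductive step, I would assume $\frac{z_n}{z_{n-1}} < h_n$ for some $n \geq N$ and derive $\frac{z_{n+1}}{z_n} < h_{n+1}$. Starting from \eqref{rr-c-}, divide both sides by $z_n$ to get
\[
\frac{z_{n+1}}{z_n} = a_n + b_n \cdot \frac{z_{n-1}}{z_n} = a_n + \frac{b_n}{z_n/z_{n-1}}.
\]
Since $z_n > 0$ for all $n$, the ratio $z_n/z_{n-1}$ is positive, so the inductive hypothesis $0 < z_n/z_{n-1} < h_n$ (which in particular forces $h_n > 0$) yields $\frac{1}{z_n/z_{n-1}} > \frac{1}{h_n}$. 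The key point is that multiplying by the negative number $b_n$ reverses the inequality, giving $\frac{b_n}{z_n/z_{n-1}} < \frac{b_n}{h_n}$. Hence
\[
\frac{z_{n+1}}{z_n} < a_n + \frac{b_n}{h_n} < h_{n+1},
\]
where the last step is exactly the hypothesis $h_{n+1} > a_n + b_n/h_n$.

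There is essentially no obstacle here: the whole argument is a two-line induction whose only subtle point is being careful about the sign of $b_n$ when passing from $1/(z_n/z_{n-1}) > 1/h_n$ to the corresponding inequality after multiplication. One should also note implicitly that $h_n > 0$ is guaranteed inductively (one cannot have $z_n/z_{n-1} < h_n$ with $h_n \leq 0$ since $z_n/z_{n-1} > 0$), which legitimizes the division by $h_n$ at each step. This parallels exactly the companion lemma in the $b_n > 0$ case stated earlier in the excerpt, with the direction of the inequality reversed to produce an upper bound rather than a lower bound.
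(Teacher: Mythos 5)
Your induction is correct: the base case is the hypothesis, and in the inductive step the positivity of $z_n/z_{n-1}$ forces $h_n>0$, so $1/(z_n/z_{n-1})>1/h_n$ and multiplying by $b_n<0$ reverses the inequality to give $z_{n+1}/z_n = a_n + b_n/(z_n/z_{n-1}) < a_n + b_n/h_n < h_{n+1}$. The paper itself gives no proof (the lemma is quoted from \cite{CGW1}), and your two-line induction is exactly the standard argument for this statement, so there is nothing to add.
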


Now we give an example as follows.
\begin{ex}
The Domb numbers $D_n$ are given by
$$D_n=\sum_{k=0}^n\binom{n}{k}^2\binom{2k}{k}\binom{2(n-k)}{n-k}$$
for all $n\geq0$. The first few terms in the sequence of Domb
numbers $\{D_n\}_{n\ge0}$ are as follows:
$$1,4,28,256,2716,\ldots,$$
see \cite[A002895]{Slo}. They also satisfy the recurrence
$$(n+1)^3D_{n+1}=2(2n+1)(5n(n+1)+2)D_n-64n^3D_{n-1}$$ for all $n\geq 1$.

\begin{prop}
For the Domb numbers $D_n$, the ratio sequence
$\{\frac{D_{n+1}}{D_n}\}_{n\ge 0}$ is ratio log-convex.
\end{prop}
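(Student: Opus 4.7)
The plan is to apply Theorem~\ref{thm-} with $a_n=\frac{2(2n+1)(5n(n+1)+2)}{(n+1)^3}$ and $b_n=-\frac{64n^3}{(n+1)^3}$ read off from the given recurrence, noting that $b_n<0$ for $n\geq 1$. The asymptotic balance $L^2-20L+64=0$ (coming from $\lim_{n\to\infty}a_n=20$ and $\lim_{n\to\infty}b_n=-64$) selects the dominant root $L=\lim_{n\to\infty}D_{n+1}/D_n=16$, so $D_{n+1}/D_n<a_n$ eventually and the hypotheses of Theorem~\ref{thm-} are within reach.

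First I would verify the standing inequality $a_{n+1}a_n+b_{n+1}>0$ for $n\geq 1$; after clearing the positive denominators this collapses to a polynomial inequality in $n$ that is easily handled (it is eventually $\approx 400-64>0$). Next I would construct bracketing sequences $r(n)\leq D_n/D_{n-1}\leq s(n)\leq a_n$. For the upper bound I would apply the lemma following Theorem~\ref{thm-}: take an ansatz $h_n=16+\frac{c}{n}$ with $c>0$, check the base case for some $N$ using the initial Domb values $1,4,28,256,2716,\ldots$, and verify the propagation inequality $h_{n+1}>a_n+b_n/h_n$, which reduces to a polynomial inequality in $n$. For the lower bound I would take $r(n)=16-\frac{c'}{n}$ with $c'>0$; since Chen et al.\ already established the log-convexity of $\{D_n\}$, the ratio $D_n/D_{n-1}$ is non-decreasing, so a pointwise lower bound verified at $n=N$ automatically propagates, and $r(n)\to 16$ guarantees the bound is eventually tight.

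With $r(n)$ and $s(n)$ in hand, it remains to verify conditions (ii) and (iii) of Theorem~\ref{thm-}. Condition (ii) gives two rational inequalities in $n$ whose numerators are polynomials of modest degree, checkable by Maple. Condition (iii) demands $f''(r(n))>0$, $f'(r(n))>0$, and $f(s(n))<0$ for the degree-seven polynomial $f$ from (\ref{Function}); substituting the rational expressions for $a_n,b_n,r(n),s(n)$ and collecting yields rational functions in $n$ whose signs are controlled by polynomial numerators. The main obstacle is the calibration of $r(n)$ and $s(n)$: they must be tight enough that $f(s(n))<0$ while simultaneously leaving $f'(r(n))>0$ and $f''(r(n))>0$. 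A crude choice like $r(n)=s(n)=16$ would fail to separate the sign of $f$, so one must tune the $O(1/n)$ correction constants $c,c'$ using the leading asymptotic expansion of $f$ near $x=16$; once the correct ansatz is found, verification for $n\geq N$ becomes routine and the finitely many smaller indices are cleared by direct computation of $D_{n+2}D_{n-2}D_n^6-D_{n+1}^4D_{n-1}^4$.
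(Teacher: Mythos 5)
Your overall strategy matches the paper's: apply Theorem~\ref{thm-} with the same $a_n$ and $b_n$, verify $a_{n+1}a_n+b_{n+1}>0$, bracket the ratio $D_n/D_{n-1}$ by sequences $r(n)\leq s(n)\leq a_n$, check (ii)--(iii) by Maple, and clear the finitely many small indices directly. However, your concrete ansatz for the upper bound has the wrong sign of the $1/n$ correction, and this breaks condition~(iii). Since $\{D_n\}$ is log-convex, the ratio $D_n/D_{n-1}$ increases to its limit $16$ from \emph{below}, and the asymptotically correct upper bound is $16-24/n+O(1/n^2)$; this is exactly the bound $s(n)=\frac{16n^3-24n^2+12n-2}{n^3}$ of Chen et al.\ that the paper imports. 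Your proposed $h_n=16+c/n$ with $c>0$ does satisfy the propagation inequality of the lemma, but it cannot be tuned to give $f(s(n))<0$: the limiting polynomial satisfies $f_\infty(16)=0$ and $f_\infty'(16)=27648\cdot 2^{20}>0$, while the coefficient drift gives $f_n(16)\approx 41472\cdot 2^{24}/n>0$, so $f_n(16+c/n)>0$ for every $c>0$ and all large $n$. More precisely, $f_n(16-\epsilon/n)\approx 2^{20}\,(663552-27648\,\epsilon)/n$, so condition~(iii) forces the upper bound to dip below $16$ by at least about $24/n$ --- precisely the Chen et al.\ bound, for which the first-order term cancels and the sign of $f(s(n))$ is decided at second order. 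The calibration is therefore not a routine matter of adjusting $c$ within your ansatz; the ansatz itself must be replaced by one approaching $16$ from below at the right rate.

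A smaller point: the delicacy is confined to $s(n)$. For the lower bound the paper simply takes the constant $r(n)=15$, valid for $n\geq 30$ because the ratio is increasing, since $f''(r(n))>0$, $f'(r(n))>0$ and condition~(ii) are robust; your $16-c'/n$ is unnecessary refinement (and the log-convexity of $\{D_n\}$ used to propagate the lower bound is due to Wang and Zhu, not Chen et al.). With $s(n)$ replaced by the Chen et al.\ bound, your outline coincides with the paper's proof, including the final direct verification of $D_{n+2}D_{n-2}D_n^6-D_{n+1}^4D_{n-1}^4>0$ on an initial segment; the paper needs this up to $n=181$ because $f'(15)>0$ and $f''(15)>0$ only hold from $n=181$ on.
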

\begin{proof}
Let $a_n=\frac{2(2n+1)(5n(n+1)+2)}{(n+1)^3}$ and
$b_{n}=\frac{-64n^3}{(n+1)^3}$. It is easy to check
$$a_{n+1}a_n+b_{n+1}=\frac{16(716 + 2940 n + 5304 n^2 + 4605 n^3 +
1935n^4 + 351n^5 + 21n^6)}{(1 + n)^3(2 + n)^3}>0$$ for $n\geq 1$.
From Chen {\it et al.} \cite{CGW1}, we have
$$ \frac{D_{n}}{D_{n-1}}\leq\frac{16n^3 - 24n^2 + 12n - 2}{n^3}\leq a_n.$$
On the other hand, by the log-convexity of $D_n$ (see Wang and
Zhu~\cite{WZ13}), we know that $\frac{D_{n}}{D_{n-1}}$ is increasing
and it is easy to obtain $\frac{D_{n}}{D_{n-1}}>15$ for $n\geq 30$.
Thus, taking $r(n)=15$ and $s(n)=\frac{16n^3 - 24n^2 + 12n -
2}{n^3}$, we can get
\begin{eqnarray*}&&8(a_{n+1}a_n+b_{n+1})r(n)+5
(a_{n+1}b_{n}-a_{n-1}a_{n+1}a_n-a_{n-1}b_{n+1})\\
&=&5\left[ 24(a_{n+1}a_n+b_{n+1})+
a_{n+1}b_{n}-a_{n-1}a_{n+1}a_n-a_{n-1}b_{n+1}\right]>0
\end{eqnarray*}
and $15a_n+b_n>0$ for $n\geq 1$ by Maple.

Further, substituting $a_n=\frac{2(2n+1)(5n(n+1)+2)}{(n+1)^3}$ and
$b_{n}=\frac{-64n^3}{(n+1)^3}$ into (\ref{Function}) and using
Maple, we can easily get that $f''(15)>0$ and $f'(15)>0$ for
$n\geq181$, and $f(s(n))<0$ for $n\geq1$. By Theorem \ref{thm-}, we
have $\{\frac{D_{n+1}}{D_n}\}_{n\ge 181}$ is ratio log-convex. For
$0\leq n\leq 181$, it can be checked that
\begin{eqnarray*} D_{n+2}D_{n-2}D_n^6-D_{n+1}^4D_{n-1}^4>0
\end{eqnarray*}
by Maple. Thus, $\{\frac{D_{n+1}}{D_n}\}_{n\ge 0}$ is ratio
log-convex. This completes the proof.
\end{proof}
\end{ex}
\section{Infinite Log-monotonicity}
In this section, we proceed to infinite log-monotonicity of some
sequences.  The derangements number $d_n$ is log-convex and ratio
log-concave, see \cite{LW07} and \cite{CGW} respectively. Thus, by
Proposition~\ref{prop+d}, we know that $\{d_n\}_{n\geq 3}$ is
log-monotonic of order $3$. In fact, we can demonstrate that
$\{d_n\}_{n\ge 3}$ is asymptotically infinitely log-monotonic. In
order to do so, we need following two results. One is the known
result
\begin{eqnarray}\label{e}
|d_{n}-\frac{n!}{e}|\leq\frac{1}{2}
\end{eqnarray} for $n\geq 3$ and the other is that $\{\Gamma(n)\}_{n\geq1}$ is  strictly infinitely log-monotonic, see Chen et al. \cite{CGW1}.

\begin{thm}
The sequence of the derangements numbers $\{d_n\}_{n\ge 3}$ is an
asymptotically infinitely logarithmically monotonic sequence.
\end{thm}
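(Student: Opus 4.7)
The plan is to exploit the exponentially good approximation $d_n\approx n!/e=\Gamma(n+1)/e$ together with the cited fact that $\{\Gamma(n)\}_{n\ge 1}$ is strictly infinitely log-monotonic. Set $\gamma_n:=\Gamma(n+1)/e$. A trivial reindexing, together with the observation that multiplication by a positive constant shifts $\log$ by a constant and is therefore annihilated by the forward difference operator $\Delta$, shows that $\{\gamma_n\}_{n\ge 0}$ inherits strict infinite log-monotonicity from $\{\Gamma(n)\}_{n\ge 1}$. I then argue that the perturbation from $\gamma_n$ to $d_n$ is too small to destroy any finite-order log-monotonicity eventually.

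It is convenient to recast log-monotonicity in additive form. A short induction on $r$ yields $\log R^r\{z_n\}=\Delta^r\log z_n$, so $\{z_n\}$ is log-monotonic of order $k$ if and only if $(-1)^m\Delta^m\log z_n\ge 0$ for every $m$ with $2\le m\le k+1$. To establish asymptotic infinite log-monotonicity of $\{d_n\}$ it therefore suffices to prove, for each fixed $m\ge 2$, the existence of $N_m$ with
\[
(-1)^m\Delta^m\log d_n \;>\; 0 \qquad \text{for all } n\ge N_m,
\]
and then to set $N(k):=\max_{2\le m\le k+1}N_m$.

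By (\ref{e}), $d_n=\gamma_n(1+\eta_n)$ with $|\eta_n|\le e/(2\cdot n!)$, hence $\log d_n=\log\gamma_n+\rho_n$ with $|\rho_n|\le e/n!$ for $n$ large. Linearity of $\Delta$ gives $\Delta^m\log d_n=\Delta^m\log\gamma_n+\Delta^m\rho_n$ with the trivial estimate $|\Delta^m\rho_n|\le 2^m e/(n-m)!$, which is super-exponentially small. For the main term, iterating the mean value theorem on the smooth function $x\mapsto\log\Gamma(x+1)$ and invoking the polygamma identity
\[
(-1)^m\psi^{(m-1)}(\xi) \;=\; (m-1)!\sum_{k=0}^{\infty}\frac{1}{(k+\xi)^m} \;\ge\; \frac{(m-1)!}{(n+m+1)^m}
\]
for some $\xi\in(n+1,n+m+1)$ produces a polynomial lower bound $(-1)^m\Delta^m\log\gamma_n\ge c_m/n^m$ for $n$ large, with $c_m>0$ depending only on $m$. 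This polynomial decay dwarfs the factorial-decay error $2^m e/(n-m)!$, so $(-1)^m\Delta^m\log d_n>0$ eventually, and $N_m$ exists.

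The main technical obstacle is extracting the quantitative polygamma lower bound on $\Delta^m\log\gamma_n$ from the qualitative statement of Chen \emph{et al.}; once this is available the comparison with $\Delta^m\rho_n$ is essentially automatic because of the super-exponential gap between polynomial and factorial decay. A secondary cosmetic issue is that $N(k)\to\infty$ as $k\to\infty$, which is precisely why the conclusion is only \emph{asymptotic} infinite log-monotonicity rather than log-monotonicity of all orders from a single uniform threshold.
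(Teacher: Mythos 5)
Your proposal is correct, and it is executed along a genuinely different (and more uniform) line than the paper's. Both arguments rest on the same two ingredients --- the bound $|d_n-n!/e|\le\tfrac12$ and the strict infinite log-monotonicity of $\Gamma$ --- but the paper works multiplicatively: it sandwiches $ed_n$ between $\Gamma(n+1)-1.5$ and $\Gamma(n+1)+1.5$, substitutes these bounds into the explicit expressions $d_{n+1}d_{n-1}-d_n^2$ and $d_{n+1}^3d_{n-1}-d_n^3d_{n+2}$ to verify orders $2$ and $3$ by hand, and then disposes of all higher orders with the remark that one ``can proceed similarly'' by the ``sign-preserving property of limits.'' You instead pass to the additive form $(-1)^m\Delta^m\log z_n\ge0$ for $2\le m\le k+1$ (your reindexing $m=r+2$ checks out against the paper's definition), split $\log d_n=\log\gamma_n+\rho_n$, bound the error by $|\Delta^m\rho_n|\le 2^m e/n!$, and lower-bound the main term by the finite-difference mean value theorem plus the polygamma series, getting $(-1)^m\Delta^m\log\gamma_n\ge (m-1)!/(n+m+1)^m$. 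This gives a single quantitative argument valid for every order $m$, which is exactly the part the paper leaves implicit; in that sense your write-up is more complete than the published proof. One remark: the ``main technical obstacle'' you flag in your closing paragraph is not actually an obstacle --- the polygamma identity $(-1)^m\psi^{(m-1)}(x)=(m-1)!\sum_{k\ge0}(x+k)^{-m}$ is standard and self-contained, so you never need to extract anything from the qualitative statement of Chen \emph{et al.}; your argument already proves (a quantitative form of) the strict infinite log-monotonicity of $\{\Gamma(n)\}$ as a byproduct. The only cosmetic slips are the harmless over-generous error bound $2^m e/(n-m)!$ in place of $2^m e/n!$ and the mismatch between the theorem's threshold $n\ge3$ and the order-dependent thresholds $N_m$, which, as you correctly note, is exactly what the word ``asymptotically'' is there to absorb.
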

\begin{proof}
From (\ref{e}), we can deduce
$$\frac{n!}{e}-\frac{1}{2}\leq d_n\leq \frac{n!}{e}+\frac{1}{2},$$
which implies $$\Gamma(n+1)-\frac{3}{2}\leq e d_n\leq
\Gamma(n+1)+\frac{3}{2}.$$ Thus, we have
\begin{eqnarray*}
&&e^2(d_{n+1}d_{n-1}-d_n^2)\\
&\geq&[\Gamma(n+2)-1.5][\Gamma(n)-1.5]-[\Gamma(n+1)+1.5]^2\\
&=&(\Gamma(n))^2\left\{\left[(n+1)n-\frac{1.5}{(n-1)!}\right]\left[1-\frac{1.5}{(n-1)!}\right]-\left[n+\frac{1.5}{(n-1)!}\right]^2\right\}\\
&=&(\Gamma(n))^2\left\{n-\frac{1.5(n^2-n+1)}{(n-1)!}\right\}\\
&>0&
\end{eqnarray*}
for $n\ge4$, which implies that $\{d_n\}_{n\ge 4}$ is log-convex.
\begin{eqnarray*}
&&e^4(d_{n+1}^3d_{n-1}-d_{n}^3d_{n+2})\\
&\geq&[\Gamma(n+2)-1.5]^3[\Gamma(n)-1.5]-[\Gamma(n+1)+1.5]^3[\Gamma(n+3)+1.5]\\
&=&\Gamma(n)(\Gamma(n+1))^3\left\{\left[n+1-\frac{1.5}{n!}\right]^3\left[1-\frac{1.5}{(n-1)!}\right]-\left[1+\frac{1.5}{n!}\right]^3\left[(n+2)(n+1)n+\frac{1.5}{(n-1)!}\right]\right\}\\
&>0&
\end{eqnarray*}
for $n\ge8$, implies that $\{d_n\}_{n\ge 8}$ is ratio log-concave.
Note that $\{\Gamma(n)\}_{n\geq1}$ is strictly infinitely
log-monotonic. So, similarly, it can be proceeded to the higher
order log-monotonicity. Thus, for any positive integer $k $, by the
sign-preserving property of limits, we can obtain that there exists
a positive $N$ such that the sequence $R^r\{d_n\}_{n\geq N}$ is
log-concave for $r$ odd and not greater than $k-1$ and is log-convex
for $r$ even and not greater than $k-1$. Thus, the sequence of the
derangements numbers $\{d_n\}_{n\ge 2}$ is asymptotically infinitely
log-monotonic.
\end{proof}
Many sequences of binomial coefficients share various log-behavior
properties, see Tanny and Zuker \cite{TZ74,TZ76}, Su and
Wang~\cite{SW08} for instance. In particular, Su and Wang proved
that $\binom{dn}{\delta n}$ is log-convex in $n$ for positive
integers $d>\delta$. Recently, Chen {\it et al.} \cite{CGW1} proved
that both the Catalan numbers $\frac{1}{n+1}\binom{2n}{n}$ and the
central binomial coefficients $\binom{2n}{n}$ are infinitely
log-monotonic.
 Now we can give a generalization as follows.
\begin{thm}\label{thm+e}
Let $n_{0},k_{0},\overline{k_0}$ be nonnegative integers and
$a,b,\overline{b}$ be positive integers. Define the sequence
\begin{equation*}\label{ai}
    C_i=\frac{(n_{0}+ia)!}{(k_0+ib)!(\overline{k_0}+i\overline{b})!},\qquad i=0,1,2,\ldots.
\end{equation*}
If $a\geq b+\overline{b}$ and $-1\leq k_0-(n_0+1)b/a\leq0$, then
$\{C_i\}_{i\geq0}$ is infinitely log-monotonic.
\end{thm}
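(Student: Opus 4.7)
The plan is to reduce the infinite log-monotonicity of $\{C_i\}$ to a complete-monotonicity statement about a second log-derivative, in the spirit of the Chen--Guo--Wang proof for $\{\Gamma(n)\}_{n\geq 1}$ invoked earlier in this section. Write $C_i=g(i)$ with
$$g(x)=\frac{\Gamma(n_0+ax+1)}{\Gamma(k_0+bx+1)\,\Gamma(\overline{k_0}+\overline{b}x+1)},\qquad L(x)=\log g(x).$$
If $L''$ is completely monotonic on $(0,\infty)$, Bernstein's theorem gives $L''(x)=\int_0^\infty e^{-xt}\,d\nu(t)$ with $\nu\geq 0$; integrating twice yields
$$\Delta^2 L(n)=\int_0^1\!\!\int_0^1 L''(n+s+v)\,ds\,dv=\int_0^\infty e^{-nt}\Bigl(\frac{1-e^{-t}}{t}\Bigr)^{\!2}\,d\nu(t),$$
so $(-1)^k\Delta^{k+2}L(n)=\int_0^\infty(1-e^{-t})^{k+2}t^{-2}e^{-nt}\,d\nu(t)\geq 0$ for all $k,n\geq 0$. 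This unpacks precisely as $R^r\{g(n)\}$ log-convex for even $r$ and log-concave for odd $r$, i.e.\ infinite log-monotonicity of $\{C_i\}$.

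The computation $L''(x)=a^2\psi'(n_0+ax+1)-b^2\psi'(k_0+bx+1)-\overline{b}^2\psi'(\overline{k_0}+\overline{b}x+1)$, combined with the classical representation $\psi'(y)=\int_0^\infty\frac{t\,e^{-yt}}{1-e^{-t}}\,dt$ and the substitutions $t\mapsto u/a,\,u/b,\,u/\overline{b}$ in the three pieces, gives
$$L''(x)=\int_0^\infty u\,e^{-xu}H(u)\,du,\qquad H(u)=\sum_{j\geq n_0+1}e^{-ju/a}-\sum_{j\geq k_0+1}e^{-ju/b}-\sum_{j\geq\overline{k_0}+1}e^{-ju/\overline{b}}.$$
By the uniqueness half of Bernstein's theorem, $L''$ is completely monotonic if and only if $H(u)\geq 0$ for every $u>0$.

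Establishing this pointwise inequality is the main obstacle, and it is where the hypotheses $a\geq b+\overline{b}$ and $-1\leq k_0-(n_0+1)b/a\leq 0$ (together with the analogous condition on $\overline{k_0}$ that the argument requires) are used. As $u\to 0^+$ every sum is asymptotic to $p^2/u$ with $p\in\{a,b,\overline{b}\}$, so $a\geq b+\overline{b}$ forces $a^2\geq (b+\overline{b})^2\geq b^2+\overline{b}^2$. As $u\to\infty$ each sum is dominated by its first term, and rewriting $k_0\geq (n_0+1)b/a-1$ as $(n_0+1)/a\leq (k_0+1)/b$ makes the LHS exponential decay at least as slowly as the first subtracted piece, and analogously for $\overline{b}$. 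For intermediate $u$ my plan is to pass to the common variable $\xi=e^{-u/d}$ with $d=\operatorname{lcm}(a,b,\overline{b})$, so that $H$ becomes a rational function of $\xi\in(0,1)$ whose numerator, after clearing the positive denominators $(1-\xi^{d/a})(1-\xi^{d/b})(1-\xi^{d/\overline{b}})$, is a polynomial $P(\xi)$. Setting $\alpha=d/a$, $\beta=d/b$, $\gamma=d/\overline{b}$ and $A=(n_0+1)\alpha$, $B=(k_0+1)\beta$, $C=(\overline{k_0}+1)\gamma$, a short computation shows $P(1)=P'(1)=0$ and $P''(1)=2(\beta\gamma-\alpha\beta-\alpha\gamma)$, the last quantity nonnegative because $a\geq b+\overline{b}$ is equivalent to $\beta\gamma\geq\alpha(\beta+\gamma)$. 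Writing $P(\xi)=(1-\xi)^2 Q(\xi)$, the task reduces to $Q\geq 0$ on $(0,1)$, which I would attack by grouping the twelve monomial terms of $P$ into six sign-matched pairs and exploiting $A\leq B$ and $A\leq C$ to bound each pair. Once $H\geq 0$ is secured, the reduction in the first paragraph delivers the theorem.
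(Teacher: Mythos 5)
Your overall architecture is the same as the paper's: both proofs reduce the theorem to complete monotonicity of $[\log g(x)]''$ (the paper simply cites the Chen--Guo--Wang lemma for this reduction; your Bernstein/finite-difference derivation of it is correct), and your kernel $H(u)$ is exactly the paper's $h(t,u)$ after the substitution $t\mapsto u/a$ and the normalizations $p=a/b$, $q=a/\overline{b}$. The problem is that the entire content of the theorem is the pointwise inequality $H(u)\ge 0$, and you do not prove it. What you supply is boundary asymptotics (the small-$u$ one is off: $\sum_{j\ge m}e^{-ju/p}=\frac{e^{-mu/p}}{1-e^{-u/p}}\sim p/u$, not $p^2/u$, though $a\ge b+\overline{b}$ suffices either way) together with a plan: substitute $\xi=e^{-u/d}$, clear denominators, factor $(1-\xi)^2$ out of a polynomial $P$, and ``group the twelve monomial terms into six sign-matched pairs.'' That last step is where the theorem lives, and it is not carried out. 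Nor is it clear it can be as described: you never say how the two-sided hypothesis $-1\le k_0-(n_0+1)b/a\le 0$ enters the pairing (your boundary checks use only the lower bound on $k_0$), and the exponents $A,B,C$ are essentially arbitrary reals with $A\le B$, so the monomials of $P$ have no fixed ordering on $(0,1)$ to pair against.

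The paper closes this gap by a softer argument that avoids polynomial positivity entirely: setting $u=k_0-(n_0+1)b/a\in[-1,0]$, the kernel is concave as a function of the parameter $u$ (the $u$-dependent terms are negatives of exponentials in $u$), so positivity on the whole interval follows from positivity at the two endpoints $u=0$ and $u=-1$; each endpoint is then handled by the single elementary fact that $s\mapsto \frac{se^{-s}}{1-e^{-s}}$ is decreasing on $(0,\infty)$ combined with $\frac{1}{p}+\frac{1}{q}=\frac{b+\overline{b}}{a}\le 1$. Replacing your polynomial plan with this concavity-plus-endpoints argument is the missing ingredient. Your side remark that the argument seems to require an analogous condition on $\overline{k_0}$ is a fair observation about the hypotheses --- the paper's endpoint $u=-1$ is disposed of by ``switching the roles of $p$ and $q$,'' which tacitly uses such a relation --- but flagging an additional assumption is not a substitute for proving the inequality under the stated ones.
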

\begin{proof}
In order to prove this theorem, we will need a result of Chen {\it
et al.} \cite{CGW1} as follows: Assume that a function $f(x)$ such
that $[\log f(x)]''$ is completely monotonic for $x \geq 1$ and $a_n
= f(n)$ for $n \geq 1$. Then the sequence $\{a_n\}_{n\geq1}$ is
infinitely log-monotonic. In what follows, we will apply this result
to our proof. Since
\begin{equation*}
 C_i=\frac{(n_{0}+ia)!}{(k_0+ib)!(\overline{k_0}+i\overline{b})!}
 =\frac{\Gamma(n_{0}+ai+1)}{{\Gamma(k_{0}+bi+1)}
{\Gamma(\overline{k_{0}}+i\overline{b}+1)}},
\end{equation*}
we define a function
\begin{eqnarray*}
g(x)=\log\frac{\Gamma(n_{0}+ax+1)}{{\Gamma(k_{0}+bx+1)}
{\Gamma(\overline{k_{0}}+x\overline{b}+1)}}.
\end{eqnarray*}
Thus, we can obtain that
\begin{eqnarray}\label{eq}
&&[g(x)]^{(n)}\nonumber\\
&=&[\log\Gamma(n_{0}+ax+1)]^{(n)}-[\log{\Gamma(k_{0}+bx+1)}]^{(n)}-[\log{\Gamma(\overline{k_{0}}+x\overline{b}+1)}]^{(n)}\nonumber\\
&=&(-1)^n\int_{0}^{\infty}\frac{t^{n-1}}{1-e^{-t}}\left[a^ne^{-t(n_{0}+ax+1)}-b^ne^{-t(k_{0}+bx+1)}-\overline{b}^ne^{-t(\overline{k_{0}}+x\overline{b}+1)}\right]dt\\
&=&(-1)^n\int_{0}^{\infty}a^nt^{n-1}e^{-tax}\left[\frac{e^{-(n_0+1)t}}{1-e^{-t}}-\frac{e^{-ta(k_{0}+1)/b}}{1-e^{-at/b}}-\frac{e^{-ta(\overline{k_{0}}+1)/\overline{b}}}{1-e^{-at/\overline{b}}}\right]dt\nonumber
\end{eqnarray}
since
$$[\log\Gamma(x)]^{(n)}=(-1)^{n}\int_{0}^{\infty}\frac{t^{n-1}e^{-tx}}{1-e^{-t}}dt$$ for
$x>0$ and $n\geq2$, see~\cite[p.16]{MOS66} for instance.

It follows from $a>b>0$ that for further simplification denote
$u=k_0-(n_0+1)b/a$, $p=a/b$, and $q=a/\overline{b}$. Clearly,
$\frac{1}{p}+\frac{1}{q}\leq1$. So we deduce that
\begin{eqnarray}\label{eqq}
(-1)^n[g(x)]^{(n)}
&=&\int_{0}^{\infty}a^nt^{n-1}e^{-t(n_{0}+ax+1)}h(t,u)dt,
\end{eqnarray}
where
$$h(t,u)=\frac{1}{1-e^{-t}}-\frac{e^{-tp(u+1)}}{1-e^{-pt}}-\frac{e^{uqt}}{1-e^{-qt}}.$$

Furthermore, we have the next claim for $-1\leq
k_0-(n_0+1)b/a\leq0$.
\begin{cl}
If $-1\leq u\leq 0$, then $h(t,u)>0$.
\end{cl}
\textbf{Proof of Claim:} It is obvious that $h(t, u)$ is concave in
$u$. Thus it suffices to show $h(t, u)> 0$ for $u =-1$ and $u = 0$.
Setting $u = 0$ since the case $u =-1$ can be obtained by switching
the roles of $p$ and $q$, we have
\begin{eqnarray*}
h(t,0)&=&\frac{e^{-t}}{1-e^{-t}}-\frac{e^{-tp}}{1-e^{-pt}}-\frac{e^{-qt}}{1-e^{-qt}}.
\end{eqnarray*}
Noting for $s>0$ that function $$f(s)=\frac{se^{-s}}{1-e^{-s}}$$
strictly decreases in $s$ and $\frac{1}{p}+\frac{1}{q}\leq1$, we
have
\begin{eqnarray*}
h(t,0)&\geq&(\frac{1}{p}+\frac{1}{q})\frac{e^{-t}}{1-e^{-t}}-\frac{e^{-tp}}{1-e^{-pt}}-\frac{e^{-qt}}{1-e^{-qt}}\\
&=&\frac{f(t)-f(tp)}{tp}+\frac{f(t)-f(tq)}{tq}\\
&\ge&0.
\end{eqnarray*}
 This completes the proof of this
Claim.

Thus, by (\ref{eqq}) and this Claim, we have $(-1)^n[g(x)]^{(n)}>0$,
which implies the infinite log-monotonicity of $C_i$ by the result
of Chen {\it et al.} \cite{CGW1}. This completes the proof.
\end{proof}
It follows from Theorem~\ref{thm+e} that the following two
corollaries are immediate.
\begin{cor}
Let $n_{0},k_{0},\D,\d$ be four nonnegative integers. Define the
sequence
\begin{equation*}\label{ai}
    C_i=\binom{n_{0}+i\D}{k_0+i\d},\qquad i=0,1,2,\ldots.
\end{equation*}
If $\D>\d>0$ and $-1\leq k_0-(n_0+1)\d/\D\leq0$, then the sequence
$\{C_n\}_{n\geq0}$ infinitely log-monotonic.
\end{cor}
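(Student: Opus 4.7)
The plan is to derive this corollary as a direct specialization of Theorem~\ref{thm+e}. The binomial coefficient $\binom{n_0+iD}{k_0+i\delta}$ can be written as a ratio of three factorials, namely
\[
\binom{n_0+iD}{k_0+i\delta}=\frac{(n_0+iD)!}{(k_0+i\delta)!\,\bigl((n_0-k_0)+i(D-\delta)\bigr)!},
\]
so I would match this with the quantity $C_i$ from Theorem~\ref{thm+e} by taking $a=D$, $b=\delta$, $\overline{b}=D-\delta$, and $\overline{k_0}=n_0-k_0$, keeping the same $n_0$ and $k_0$ as in the statement.

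Next I would verify the two hypotheses of Theorem~\ref{thm+e}. The inequality $a\geq b+\overline{b}$ becomes $D\geq \delta+(D-\delta)=D$, which holds with equality, so this hypothesis is trivially satisfied. The condition $-1\leq k_0-(n_0+1)b/a\leq 0$ becomes exactly the assumption $-1\leq k_0-(n_0+1)\delta/D\leq 0$ of the corollary. Before invoking Theorem~\ref{thm+e}, I would also record that $\overline{k_0}=n_0-k_0\geq 0$: indeed the upper bound $k_0\leq (n_0+1)\delta/D$ together with $0<\delta<D$ gives $k_0<n_0+1$, hence $k_0\leq n_0$, so $\overline{k_0}$ is a legitimate nonnegative integer parameter.

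With all hypotheses of Theorem~\ref{thm+e} verified, I can immediately conclude that $\{C_i\}_{i\geq 0}=\bigl\{\binom{n_0+iD}{k_0+i\delta}\bigr\}_{i\geq 0}$ is infinitely log-monotonic, which finishes the proof. There is no real obstacle here beyond correctly pairing the parameters; the nontrivial content is entirely contained in Theorem~\ref{thm+e} and the associated completely-monotonic-function criterion used in its proof.
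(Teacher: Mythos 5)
Your proposal is correct and matches the paper's intent exactly: the paper states this corollary as an immediate consequence of Theorem~\ref{thm+e}, and your parameter matching ($a=\D$, $b=\d$, $\overline{b}=\D-\d$, $\overline{k_0}=n_0-k_0$) together with the verification that $a=b+\overline{b}$ and that $\overline{k_0}\geq 0$ is precisely the intended specialization. Your explicit check that $k_0\leq n_0$ follows from the hypothesis is a small but welcome detail the paper leaves unsaid.
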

For integer $p\geq 2$, Fuss-Catalan numbers~\cite{HP} are given by
the formula
$$C_p(n)=\frac{1}{(p-1)n+1}\binom{pn}{n}=\frac{1}{pn+1}\binom{pn+1}{n}=\frac{(pn)!}{((p-1)n+1)!n!}.$$
It is well known that the Fuss-Catalan numbers count the number of
paths in the integer lattice $\mathbb{Z}\times \mathbb{Z}$ (with
directed vertices from $(i,j)$ to either $(i,j+1)$ or $(i+1,j))$
from the origin $(0,0)$ to $(n,(p-1)n)$ which never go above the
diagonal $(p-1)x=y$.
\begin{cor}
The Fuss-Catalan sequence $\{C_p(n)\}_{n\geq0}$ is  infinitely
log-monotonic.
\end{cor}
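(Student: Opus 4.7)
The proof is essentially an immediate application of Theorem \ref{thm+e}; the only subtlety is the correct assignment of parameters. Writing the Fuss-Catalan number as
$$C_p(n) = \frac{(pn)!}{((p-1)n+1)!\,n!}$$
and comparing with
$$C_i = \frac{(n_0 + ia)!}{(k_0 + ib)!\,(\overline{k_0} + i\overline{b})!},$$
the identifications $n_0 = 0$ and $a = p$ are forced by the numerator. The issue is which of the two denominator factorials should play the role of $(k_0,b)$ and which the role of $(\overline{k_0},\overline{b})$.

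The naive attempt $(k_0, b) = (1, p-1)$ with $(\overline{k_0}, \overline{b}) = (0,1)$ produces $k_0 - (n_0+1)b/a = 1 - (p-1)/p = 1/p > 0$, which violates the upper bound in the hypothesis of Theorem \ref{thm+e}. Swapping the two denominators, i.e.\ taking $(k_0, b) = (0, 1)$ to match $n!$ and $(\overline{k_0}, \overline{b}) = (1, p-1)$ to match $((p-1)n+1)!$, gives $k_0 - (n_0+1)b/a = -1/p$, and for every integer $p \geq 2$ this lies in $[-1,0]$. The other hypothesis $a \geq b + \overline{b}$ becomes $p \geq 1 + (p-1) = p$, which holds with equality.

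Both hypotheses of Theorem \ref{thm+e} being verified, the infinite log-monotonicity of $\{C_p(n)\}_{n\geq 0}$ follows at once. There is no genuine obstacle in the argument; the only thing to watch is that the condition $-1 \leq k_0 - (n_0+1)b/a \leq 0$ is asymmetric in $b$ and $\overline{b}$, so one must be careful to assign the single denominator factor $n!$ (rather than $((p-1)n+1)!$) to the pair $(k_0,b)$.
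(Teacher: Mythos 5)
Your proposal is correct and is exactly the application the paper intends: the corollary is stated as an immediate consequence of Theorem~\ref{thm+e}, and your parameter choice $n_0=0$, $a=p$, $(k_0,b)=(0,1)$, $(\overline{k_0},\overline{b})=(1,p-1)$ verifies both hypotheses, with your observation about which denominator must be assigned to $(k_0,b)$ being the only (correctly handled) subtlety.
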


\small

\end{document}